\newtheorem{definition}{Definition}
\newtheorem{lemma}{Lemma}
\newtheorem{theorem}{Theorem}
\newtheorem{corollary}{Corollary}
\theoremstyle{definition}
\newtheorem{remark}{Remark}
\newtheorem*{ackno}{Acknowledgement}
\DeclareMathOperator{\xctint}{I}
\DeclareMathOperator{\numint}{Q}
\DeclareMathOperator{\WCE}{wce}
\newcommand{\eps}{\varepsilon}
\newcommand{\dd}{\mathrm{d}}
\begin{document}
\title[Hyperuniformity]
{Hyperuniform point sets on the sphere: deterministic aspects}

\author[J. S. Brauchart]{Johann S. Brauchart\textsuperscript{\dag}}
\address[J. B., P. G., W. K.]{Institute of Analysis and Number Theory,
  Graz University of Technology,
  Kopernikusgasse 24.
8010 Graz,
Austria}
\curraddr[W. K.]{Department of Mathematics, Vanderbilt University,
1326 Stevenson Center, Nashville, TN 37240, USA}
\email[J. B.]{j.brauchart@tugraz.at}
\thanks{\dag{} This author is supported by the Lise Meitner scholarship M~2030
of the Austrian Science Foundation FWF}

\author[P. J. Grabner]{Peter J. Grabner\textsuperscript{\ddag,*}}
\thanks{\textsuperscript{\ddag} These authors are supported by the Austrian
  Science Fund FWF project F5503 (part of the Special Research Program (SFB)
  ``Quasi-Monte Carlo Methods: Theory and Applications'')}
\thanks{\textsuperscript{*} This author is supported by  by the Doctoral
  Program ``Discrete Mathematics'' W1230.}
\email[P. G.]{peter.grabner@tugraz.at}

\author[W. Kusner]{W\"oden Kusner\textsuperscript{\ddag}}
\email[W. K.]{wkusner@gmail.com}

\dedicatory{Dedicated to Robert F. Tichy on the occasion of his
  60\textsuperscript{th} birthday.}
\subjclass[2000]{65C05, 11K38, 65D30}
\keywords{hyperuniformity, uniform distribution, QMC-designs, discrepancy}
\maketitle

\begin{abstract}
  The notion of hyperuniformity originally introduced as a measure of
  regularity of infinite point sets in Euclidean space is generalised
  and extended to sequences of finite point sets on the sphere. It is
  shown that hyperuniformity implies uniform
  distribution. Furthermore, it is shown that QMC-design sequences
  with strength at least $\frac{d+1}2$ and especially sequences of
  spherical designs of optimal growth order are hyperuniform.
\end{abstract}
\section{Introduction}\label{sec:intro}
Hyperuniformity was introduced by Torquato and Stillinger
\cite{torquato2003local}
(cf.~\cite{Pietronero-Gabrielli-Labini2002:statistical_physics}) to describe
idealised \emph{infinite} point configurations, which exhibit properties
between order and disorder. Such configurations $X$ occur as jammed packings,
in colloidal suspensions, as well as quasi-crystals. The main feature of
hyperuniformity is the fact that local density fluctuations are of smaller
order than for a random (``Poissonian'') point configuration. Alternatively,
hyperuniformity can be characterised in terms of the structure factor
\begin{equation*}
  S(\mathbf{k})=\lim_{B\to\mathbb{R}^d}\frac1{\#(B\cap X)}
\sum_{\mathbf{x},\mathbf{y}\in B\cap X}e^{i\langle\mathbf
  k,\mathbf{x}-\mathbf{y}\rangle}\quad
\text{(thermodynamic limit)}
\end{equation*}
by $\lim_{\mathbf{k}\to\mathbf{0}}S(\mathbf{k})=0$. This
\emph{thermodynamic limit} is understood in the sense that the volume
$B$ (for instance a ball of radius $R$) tends to the whole space
$\mathbb{R}^d$ while $\lim_{B\to\mathbb{R}^d}\frac{\#(B\cap
  X)}{\mathrm{vol}(B)}=\rho$, the density.

 For a long time in the physics literature it has been observed that there are
 large (ideally infinite) particle systems that exhibit structural behaviour
 between crystalline order and total disorder. Very prominent examples are
 given by quasi-crystals and jammed sphere packings.  The discovery of such
 physical materials which lie between crystalline order and disordered
 materials has initiated research in physics as well as in mathematics. We just
 mention de~Bruijn's Fourier analytic explanation for the diffraction pattern
 of quasi-crystals \cite{dBr86} and the extensive collection of articles on
 quasi-crystals \cite{AxGr95}.

 After the introduction of hyperuniformity in \cite{torquato2003local} as a
 concept to measure the occurrence of ``intermediate'' order as for
 quasi-crystals or jammed packings the notion has developed
 tremendously. Hyperuniformity has found applications far beyond physics:
 colour receptors in bird eyes exhibit hyperuniform structure
 \cite{Jiao-Lau-Hatzikirou+2014:avian_photoreceptor}, as do the keratin
 nanostructures in bird feathers \cite{Liew-Forster-Noh+2011:short-range}, as
 do energy minimising point configurations, and of course quasi-crystals
 \cite{Oguz_Socolar_Steinhardt+2017:hyperuniformity_quasicrystals}.

 The basic framework is as follows. Let $X$ be a countable discrete subset of
 $\mathbb{R}^d$ and $\Omega\subset\mathbb{R}^d$ be a test set (``window''), in
 most cases the unit ball. Then
 $N_{\mathbf{x}+t\Omega}=\#((\mathbf{x}+t\Omega)\cap X)$ counts the number of
 points in the translated and dilated copy of $\Omega$. As a general assumption
 we take that $X$ has a density $\rho$, meaning that
\begin{equation*}
  N_{\mathbf{x}+t\Omega}\sim \rho t^d\mathrm{vol}(\Omega)
\end{equation*}
for $t\to\infty$, independent of $\mathbf{x}$. Based on this assumption, the
thermodynamic limit can be taken to define the expectation  of
$N_{\mathbf{x}+t\Omega}$ as
\begin{equation*}
  \langle N_{t\Omega}\rangle=
\lim_{R\to\infty}\frac1{\mathrm{vol}(B(\mathbf{0},R))}\int_{B(\mathbf{0},R))}
N_{\mathbf{x}+t\Omega}\,\dd\mathbf{x},
\end{equation*}
where $B(\mathbf{0},R)$ denotes the ball of radius $R$ around $\mathbf{0}$.
For a random (``Poissonian'') point pattern the variance satisfies
$\langle N_{t\Omega}^2\rangle-\langle N_{t\Omega}\rangle^2= \langle
N_{t\Omega}\rangle=\rho t^d\mathrm{vol}(\Omega)$.  For point sets like
quasi-crystals or jammed packings the behaviour is different: the
variance has smaller order of magnitude as $t\to\infty$, ideally
\begin{equation}\label{eq:strong-hyper}
  \langle N_{t\Omega}^2\rangle-\langle
  N_{t\Omega}\rangle^2=\mathcal{O}(t^{d-1})\asymp
\mathrm{surface}(t\Omega).
\end{equation}
Here and throughout, we use the notation $f(x)\asymp g(x)$ for
$f(x)=\mathcal{O}(g(x))$ and $g(x)=\mathcal{O}(f(x))$ for the indicated range
of $x$.

Such behaviour is clearly displayed by lattices and randomly distorted
lattices, and some quasi-crystals (depending on Diophantine properties
of their construction parameters)
\cite{Oguz_Socolar_Steinhardt+2017:hyperuniformity_quasicrystals}. There
is numerical evidence that jammed sphere packings
\cite{Kansal_Torquato_Stillinger2002:diversity_jammed,
  Torquato_Truskett_Debenedetti2000:is_random_close} also exhibit such
behaviour. More generally, a point set is called hyperuniform if
\begin{equation*}
   \langle N_{t\Omega}^2\rangle-\langle
  N_{t\Omega}\rangle^2=o(t^d);
\end{equation*}
it is called strongly hyperuniform if \eqref{eq:strong-hyper} holds. It has
been shown in \cite{torquato2003local} that \eqref{eq:strong-hyper} is the
best possible order that can occur.

\section{Hyperuniformity on the sphere}\label{sec:sphere}
Complementing the extensive study of the notion of hyperuniformity in the
infinite setting, we are interested in studying an analogous property of
sequences of point sets in compact spaces. For convenience we study the
$d$-dimensional sphere $\mathbb{S}^d$. Our ideas immediately generalise to
homogeneous spaces; further generalisations might be more elaborate,
since we rely heavily on harmonic analysis and specific properties of special
functions. Throughout this paper $\sigma=\sigma_d$ will denote the normalised
surface area measure on $\mathbb{S}^d$. We suppress the dependence on $d$ in
this notation.

In order to adapt to the compact setting, we replace the infinite set $X$
studied in the classical notion of hyperuniformity by a sequence of finite
point sets $(X_N)_{N\in A}$, where we assume that $\#X_N=N$. By using an
infinite set $A\subseteq\mathbb{N}$ as index set, we always allow for
subsequences. Furthermore, the set
$X_N=\{\mathbf{x}_1^{(N)},\ldots,\mathbf{x}_N^{(N)}\}$ consists of points
depending on $N$; for the ease of notation, we omit this dependence throughout
the paper.  We propose the Definition~\ref{def-hyper} below, which we study in
further detail in this paper.

Throughout the paper we use the notation
\begin{equation*}
  C(\mathbf{x},\phi)=\{\mathbf{y}\in\mathbb{S}^d\mid
  \langle\mathbf{x},\mathbf{y}\rangle>\cos\phi\}
\end{equation*}
for the spherical cap of opening angle $\phi$ with center $\mathbf{x}$. The
normalised surface area of the cap is given by
\begin{equation} \label{eq:normalised.surface.cap}
  \sigma\left(C(\mathbf{x},\phi)\right)=
  \gamma_d\int_0^\phi\sin(\theta)^{d-1}\dd\theta\asymp\phi^{d} \quad\text{as
  }\phi\to0,
\end{equation}
where
\begin{equation*}
  \gamma_d=\left(\int_0^\pi\sin(\theta)^{d-1}\dd\theta\right)^{-1}=
  \frac{\Gamma(d)}{2^{d-1}\Gamma(d/2)^2}.
\end{equation*}
Notice that $\gamma_d=\frac{\omega_{d-1}}{\omega_d}$, where $\omega_d$ is the
surface area of $\mathbb{S}^d$.

For the reader's convenience and for later reference we first recapitulate the
definition of uniform distribution of a sequence $(X_N)_{N\in A}$ of point sets
on the sphere $\mathbb{S}^d$ (see
\cite{Kuipers_Niederreiter1974:uniform_distribution_sequences,
  Drmota_Tichy1997:sequences_discrepancies} as general references on the theory
of uniform distribution).
\begin{definition}[Uniform distribution]\label{def-ud}
  A sequence of point sets $(X_N)_{N\in A}$ is called uniformly distributed on
  $\mathbb{S}^d$, if for all caps $C(\mathbf{x},\phi)$
  \textup{(}$\mathbf{x}\in\mathbb{S}^d$ and $\phi\in[0,\pi]$\textup{)} the relation
  \begin{equation}
    \label{eq:ud}
    \lim_{N\to\infty}\frac1N\sum_{i=1}^N
    \mathbbm{1}_{C(\mathbf{x},\phi)}(\mathbf{x}_i)=
    \sigma(C(\mathbf{x},\phi))
  \end{equation}
  holds. Here $\mathbbm{1}_C$ denotes the indicator function of the set $C$.
\end{definition}
It is known from the general theory of uniform distribution (see
\cite{Kuipers_Niederreiter1974:uniform_distribution_sequences}) that
\eqref{eq:ud} is equivalent to
\begin{equation}
  \label{eq:ud-p}
  \lim_{N\to\infty}\frac1{N^2}\sum_{i,j=1}^N
  P_n^{(d)}(\langle\mathbf{x}_i,\mathbf{x}_j\rangle)=0\quad\text{ for }n\geq1,
\end{equation}
where $P_n^{(d)}(x)$ is the $n$-th (generalised) Legendre polynomial normalised
by $P_n^{(d)}(1)=1$. These functions are the zonal spherical harmonics on
$\mathbb{S}^d$ (see \cite{Mueller1966:spherical_harmonics}). Notice that
\begin{equation*}
  Z(d,n)P_n^{(d)}(x)=\frac{n+\lambda}{\lambda}C_n^\lambda(x),
\end{equation*}
where $C_n^\lambda(x)$ is the $n$-th Gegenbauer polynomial with index
$\lambda=\frac{d-1}2$ (see
\cite{Magnus_Oberhettinger_Soni1966:formulas_theorems}). We write
$Z(d,n)=\frac{2n+d-1}{d-1}\binom{n+d-2}{d-2}$ for the dimension of the
space of spherical harmonics of degree $n$ on $\mathbb{S}^d$.

The spherical cap discrepancy
\begin{equation*}
  D_N^\infty(X_N)=\sup_{\mathbf{x},\phi}
  \left|\sum_{n=1}^N\mathbbm{1}_{C(\mathbf{x},\phi)}(\mathbf{x}_n)-
    N\sigma(C(\cdot,\phi))\right|
\end{equation*}
provides a well studied quantitative measure of uniform distribution (see
\cite{Kuipers_Niederreiter1974:uniform_distribution_sequences,
  Beck_Chen1987:irregularities_distribution}). Uniform distribution of
$(X_N)_{N\in A}$ is equivalent to
\begin{equation*}
  \lim_{N\to\infty}\frac1ND_N^\infty(X_N) = 0.
\end{equation*}
In this paper we will study the \emph{number variance}.
  \begin{definition}[Number variance]\label{def:number-v}
  Let $(X_N)_{N\in\mathbb{N}}$ be a sequence of point sets on the sphere
  $\mathbb{S}^d$. The \emph{number variance} of the sequence for caps of
  opening angle $\phi$ is given by
  \begin{equation*}
    V(X_N,\phi):=\mathbb{V}_{\mathbf{x}}\#\left(X_N\cap
      C(\mathbf{x},\phi)\right)=\int_{\mathbb{S}^d}
  \left(\sum_{n=1}^N\mathbbm{1}_{C(\mathbf{x},\phi)}(\mathbf{x}_n)-
    N\sigma(C(\cdot,\phi))\right)^2\,\dd\sigma(\mathbf{x}).
  \end{equation*}
  \end{definition}

This quantity appears in the classical measure of uniform
  distribution given by the $L^2$-discrepancy
\begin{equation}
  \label{eq:discr-L2}
  D_N^2(X_N)= \left(\int_0^\pi V(X_N,\phi)
\sin(\phi)\,\dd\phi\right)^{\frac12},
\end{equation}
where uniform distribution of $(X_N)_{N\in A}$ is also equivalent to
\begin{equation*}
  \lim_{N\to\infty}\frac1ND_N^2(X_N)=0.
\end{equation*}

As in the Euclidean case we define hyperuniformity by a comparison
between the behaviour of the number variance of a sequence of point
sets and the i.i.d case. For i.i.d points the variance is
$N\sigma(C(\cdot,\phi))(1-\sigma(C(\cdot,\phi)))$, which has order of
magnitude $N$, $N\sigma(C(\cdot,\phi_N))$, and $t^d$, respectively, in
the three cases \eqref{eq:large}, \eqref{eq:small}, and
\eqref{eq:threshold} listed below.

\begin{definition}[Hyperuniformity]\label{def-hyper}
  Let $(X_N)_{N\in\mathbb{N}}$ be a sequence of point sets on the sphere
  $\mathbb{S}^d$.
  A sequence is called
  \begin{itemize}
  \item \textbf{hyperuniform for large caps} if
    \begin{equation}
      \label{eq:large}
      V(X_N,\phi)=o\left(N\right)\quad \text{as } N\to\infty
    \end{equation}
 for all $\phi\in(0,\frac\pi2)$ ;
\item \textbf{hyperuniform for small caps} if 
\begin{equation}
      \label{eq:small}
      V(X_N,\phi_N)=o\left(N\sigma(C(\cdot,\phi_N))\right)
      \quad \text{as } N\to\infty
    \end{equation}
      and all sequences $(\phi_N)_{N\in\mathbb{N}}$ such that
  \begin{enumerate}
  \item $\lim_{N\to\infty}\phi_N=0$
  \item $\lim_{N\to\infty}N\sigma(C(\cdot,\phi_N))=\infty$,
      which is equivalent to $\phi_NN^{\frac1d}\to\infty$.
  \end{enumerate}
\item \textbf{hyperuniform for caps at threshold order}, if
  \begin{equation}
    \label{eq:threshold}
    \limsup_{N\to\infty}V(X_N,tN^{-\frac1d})=
  \mathcal{O}(t^{d-1}) \quad\text{as } t\to\infty.
  \end{equation}
\end{itemize}
\end{definition}
\begin{remark}
  The case analogous to the Euclidean definition is the third case:
  hyperuniform for caps at threshold order. The limit
  $N\to\infty$ is the analogue of the thermodynamic limit by rescaling
  to a sphere of radius $N^{\frac1d}$.
\end{remark}
In order to determine further properties of hyperuniform sequences of sets, we
derive an alternative expression for the number variance $V(X_N,\phi_N)$. We
refer to \cite{Mueller1966:spherical_harmonics} as a general reference for
spherical harmonics in arbitrary dimension, and to
\cite{Andrews-Askey-Roy1999:Special_functions,
  Magnus_Oberhettinger_Soni1966:formulas_theorems} as references for the
various formulas and relations between special functions, especially orthogonal
polynomials.

Recall the Laplace series for the indicator function of the spherical cap
$C(\mathbf{x},\phi)$:
\begin{equation*}
  \mathbbm{1}_{C(\mathbf{x},\phi)}(\mathbf{y})=
  \sigma(C(\cdot,\phi))+\sum_{n=1}^\infty a_n(\phi)Z(d,n)
  P_n^{(d)}(\langle\mathbf{x},\mathbf{y}\rangle),
\end{equation*}
where the Laplace coefficients are given by
\begin{equation}\label{eq:an-phi}
  a_n(\phi)=\gamma_d\int_0^\phi
  P_n^{(d)}(\cos(\theta))\sin(\theta)^{d-1}\,\dd\theta= 
  \frac{\gamma_d}d\sin(\phi)^dP_{n-1}^{(d+2)}(\cos(\phi)),\quad n\geq1.
\end{equation}

The variance $V(X_N,\phi)$ can be expressed formally as
\begin{equation}\label{eq:var-harmonic}
\begin{aligned}
  V(X_N,\phi)&=\int_{\mathbb{S}^d}
  \left(\sum_{i=1}^N\mathbbm{1}_{C(\mathbf{x}_i,\phi)}(\mathbf{x})-
    N\sigma(C(\cdot,\phi))\right)^2\,\dd\sigma(\mathbf{x})\\
&=\sum_{i,j=1}^N\sum_{n=1}^\infty a_n(\phi)^2Z(d,n)
  P_n^{(d)}(\langle\mathbf{x}_i,\mathbf{x}_j\rangle),
\end{aligned}
\end{equation}
by interpreting the integral as a (spherical) convolution. This
follows from the Funk-Hecke formula
\begin{equation*}
  \int_{\mathbb{S}^d}P_m^{(d)}(\langle\mathbf{x},\mathbf{y}\rangle)
P_n^{(d)}(\langle\mathbf{y},\mathbf{z}\rangle)\,\dd\sigma(\mathbf{y})=
\delta_{m,n}P_n^{d}(\langle\mathbf{x},\mathbf{z}\rangle).
\end{equation*}
We also remark here that 
\begin{equation}
  \label{eq:pos-def}
  \sum_{i,j=1}^N  P_n^{(d)}(\langle\mathbf{x}_i,\mathbf{x}_j\rangle)\geq0
\end{equation}
by the positive definiteness of $P_n^{(d)}$ (see
\cite{Schoenberg1938:positive_definite}).

Notice that the function
\begin{equation}\label{eq:g_phi}
  g_\phi(x)=\sum_{n=1}^\infty a_n(\phi)^2Z(d,n)P_n^{(d)}(x),\quad -1\leq x\leq1,
\end{equation}
is positive definite in the sense of Schoenberg
\cite{Schoenberg1938:positive_definite}. Furthermore, the estimate
\begin{equation}
  \label{eq:Gegenbauer}
  \left|P_n^{(d)}(\cos(\phi))\right|\leq
  \min\left(1,\frac{c_d}{(n\sin(\phi))^{\frac{d-1}2}}\right)
\end{equation}
holds for a constant $c_d$ depending only on the dimension $d$ (see
\cite{Kogbetliantz24,Szegoe1975:orthogonal_polynomials}).
This gives the estimate
\begin{equation*}
  a_n(\phi)^2=\mathcal{O}\left(\frac{\sin(\phi)^{d-1}}{n^{d+1}}\right),
\end{equation*}
which holds uniformly for $\phi\in[0,\pi]$. This together with
$Z(d,n)=\mathcal{O}(n^{d-1})$ shows absolute and uniform convergence of the
series \eqref{eq:g_phi} and thus \eqref{eq:var-harmonic}.
\subsection{Hyperuniformity for large caps}\label{sec:large}
\begin{theorem} \label{thm:limit.hyperuniformity.large.caps}
  Let $(X_N)_{N\in\mathbb{N}}$ be a sequence of point sets, which is
  hyperuniform for large caps. Then for all $n\geq1$
  \begin{equation}
    \label{eq:limit}
    \lim_{N\to\infty}\frac1N\sum_{i,j=1}^N
    P_n^{(d)}(\langle\mathbf{x}_i,\mathbf{x}_j\rangle)=0.
  \end{equation}
  As a consequence, sequences which are hyperuniform for large caps are
  uniformly distributed.
\end{theorem}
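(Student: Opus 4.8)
The plan is to exploit the harmonic expansion \eqref{eq:var-harmonic} of the number variance together with the pointwise positivity \eqref{eq:pos-def}. Writing $S_n(N):=\sum_{i,j=1}^N P_n^{(d)}(\langle\mathbf{x}_i,\mathbf{x}_j\rangle)$, the expansion reads
\[
  V(X_N,\phi)=\sum_{n=1}^\infty a_n(\phi)^2 Z(d,n)\, S_n(N).
\]
By \eqref{eq:pos-def} every summand is nonnegative, since $a_n(\phi)^2\geq0$, $Z(d,n)>0$, and $S_n(N)\geq0$. The decisive observation is therefore that each individual term is dominated by the entire series: for every fixed $m\geq1$ and every admissible $\phi$,
\[
  a_m(\phi)^2 Z(d,m)\, S_m(N)\leq V(X_N,\phi).
\]

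Fixing $m\geq1$, I would then choose an opening angle $\phi\in(0,\tfrac\pi2)$ at which the $m$-th Laplace coefficient does not vanish, i.e. $a_m(\phi)\neq0$. Dividing the displayed inequality by $N\,a_m(\phi)^2 Z(d,m)$ gives
\[
  0\leq\frac1N S_m(N)\leq\frac{V(X_N,\phi)}{N\, a_m(\phi)^2 Z(d,m)}.
\]
Since the sequence is hyperuniform for large caps, $V(X_N,\phi)=o(N)$ for this $\phi$, so the right-hand side tends to $0$ and \eqref{eq:limit} follows.

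The only point requiring genuine justification — and hence the main obstacle, modest as it is — is the existence of such a $\phi$. Here I would invoke the closed form \eqref{eq:an-phi}, namely $a_m(\phi)=\frac{\gamma_d}{d}\sin(\phi)^d P_{m-1}^{(d+2)}(\cos\phi)$. On $(0,\tfrac\pi2)$ the factor $\sin(\phi)^d$ is strictly positive, so $a_m(\phi)=0$ only when $\cos\phi$ is a zero of the polynomial $P_{m-1}^{(d+2)}$. As this is a nonzero polynomial of degree $m-1$, it has at most $m-1$ real zeros, whence $a_m$ vanishes for at most finitely many $\phi\in(0,\tfrac\pi2)$ and an admissible angle can always be selected. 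For $m=1$ one even has $P_0^{(d+2)}\equiv1$, so $a_1(\phi)\neq0$ throughout.

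Finally, the consequence is immediate. Uniform distribution is characterised by the criterion \eqref{eq:ud-p}, that is, $\frac1{N^2}S_n(N)\to0$ for all $n\geq1$. Since $\frac1{N^2}S_n(N)=\frac1N\cdot\frac1N S_n(N)$ and \eqref{eq:limit} already yields $\frac1N S_n(N)\to0$, the criterion \eqref{eq:ud-p} holds a fortiori, establishing that the sequence is uniformly distributed.
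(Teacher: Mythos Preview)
Your argument is correct and mirrors the paper's proof: both isolate a single term in the nonnegative expansion \eqref{eq:var-harmonic}, bound it above by $V(X_N,\phi)/N\to0$, and use positive definiteness \eqref{eq:pos-def} to conclude. Your treatment is in fact slightly more explicit than the paper's, which defers the discussion of why one can pick $\phi$ with $a_m(\phi)\neq0$ to Remark~\ref{rem1}; your finiteness argument via the closed form \eqref{eq:an-phi} is exactly the right justification.
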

\begin{proof}
  Assume that $(X_N)_{N\in\mathbb{N}}$ is hyperuniform for large caps. Then
  inserting the definition into \eqref{eq:var-harmonic} gives
  \begin{equation*}
    0=\lim_{N\to\infty}\frac{V(X_N,\phi)}N\geq Z(d,n)a_n(\phi)^2
    \limsup_{N\to\infty}
    \frac1N\sum_{i,j=1}^N P_n^{(d)}(\langle\mathbf{x}_i,\mathbf{x}_j\rangle)
  \end{equation*}
  for every $n$ and every $\phi\in(0,\frac\pi2)$, which implies
  \eqref{eq:limit} by the positive definiteness of Legendre
  polynomials \eqref{eq:pos-def}, positivity of the Laplace
  coefficients of the series and uniform convergence.
\end{proof}
\begin{remark}\label{rem1a}
  Of course, the uniform distribution of hyperuniform point sets is no
  surprise, since the uniform density of points was built into the computation
  of variance. Furthermore, all caps of a fixed size are used in the definition
  of this regime of hyperuniformity, similarly to the definition of uniform
  distribution. The extra convergence order in \eqref{eq:limit} is the key
  observation in Theorem~\ref{thm:limit.hyperuniformity.large.caps}. Similar
  phenomena will occur in Section~\ref{sec:QMC-designs}, where the notion of
  a QMC-design as defined in \cite{Brauchart_Saff_Sloan+2014:qmc-designs} is
  exploited further and hyperuniformity of QMC-designs is shown.
\end{remark}
\begin{remark}\label{rem1}
  Notice that  it does not suffice to assume that
  \eqref{eq:large} holds for only one value of $\phi\in(0,\frac\pi2)$. For
  values of $\phi$ for which one of the coefficients $a_{n_0}(\phi)$ vanishes,
  nothing can be said about the limit \eqref{eq:limit} for $n=n_0$. There are
  of course only countably many such values of $\phi$. Furthermore, it has been
  conjectured by T.~J.~Stieltjes
  \cite{Baillaud-Bouget1905:Correspondance-Stieltjes} that the (classical)
  Legendre polynomials $P_{2n}(x)$ and $P_{2n+1}(x)/x$ are irreducible. An
  extension of this still unproved conjecture to higher dimensional Legendre
  polynomials would imply that at most one coefficient $a_n(\phi)$ could vanish
  for a given value of $\phi\in(0,\frac\pi2)$.
\end{remark}
\begin{proof}
  We construct a point sets such that \eqref{eq:limit} holds for all
  $n\neq n_0$ and
  \begin{equation}\label{eq:lim-infty}
    \lim_{N\to\infty}\frac1N\sum_{i,j=1}^N P_{n_0}^{(d)}(\langle
    \mathbf{x}_i,\mathbf{x}_j\rangle)=\infty.
  \end{equation}
  Take a non-zero spherical harmonic function $f$ of order $n_0$ which has all
  values less than $1$ in modulus. Then
  $\dd\mu(\mathbf{x})=(1+f(\mathbf{x}))\,\dd\sigma(\mathbf{x})$ is a positive
  measure on $\mathbb{S}^d$. Then by a result of Seymour and Zaslavsky
  \cite{Seymour_Zaslavsky1984:averaging_designs} for every $t$ there exists an
  $N(t)$ such that for every $N\geq N(t)$ there is a point set $X_N$ such that
\begin{equation*}
  \frac1N\sum_{i=1}^Np(\mathbf{x}_i)=\int_{\mathbb{S}^d}p(\mathbf{x})
  \,\dd\mu(\mathbf{x})=\langle 1+f,p\rangle_{L^2(\mathbb{S}^d)}
\end{equation*}
for all spherical harmonics $p$ of degree $\leq t$. Let now
$p(\mathbf{x})=P_n^{(d)}(\langle\mathbf{x},\mathbf{y}\rangle)$ for fixed
$\mathbf{y}\in\mathbb{S}^d$. For all $n\neq n_0$ and
$1\leq n\leq t$ we have
\begin{equation*}
  \frac1N\sum_{i=1}^NP_n^{(d)}(\langle\mathbf{x}_i,\mathbf{y}\rangle)=0\quad\text{for
    every }\mathbf{y}\in\mathbb{S}^d,
\end{equation*}
from which we conclude
\begin{equation*}
  \frac1N\sum_{i,j=1}^NP_n^{(d)}(\langle\mathbf{x}_i,\mathbf{x}_j\rangle)=0.
\end{equation*}
This gives the desired limit relation.

For $n=n_0$ and $t\geq n_0$ we have
\begin{equation*}
  \frac1N\sum_{i=1}^NP_{n_0}^{(d)}(\langle\mathbf{x}_i,\mathbf{y}\rangle)=
  \int_{\mathbb{S}^d}f(\mathbf{x})P_{n_0}^{(d)}(\langle\mathbf{x},\mathbf{y}\rangle)\,
  \dd\sigma(\mathbf{x})
  \quad\text{for every }\mathbf{y}\in\mathbb{S}^d.
\end{equation*}
Taking $\mathbf{y}=\mathbf{x}_j$ and summing again yields
\begin{equation*}
  \frac1{N^2}\sum_{i,j=1}^NP_{n_0}^{(d)}(\langle\mathbf{x}_i,\mathbf{x}_j\rangle)=
  \int_{\mathbb{S}^d} \int_{\mathbb{S}^d}
  f(\mathbf{x})P_{n_0}^{(d)}(\langle\mathbf{x},\mathbf{y}\rangle)f(\mathbf{y})
  \,\dd\sigma(\mathbf{x})\,\dd\sigma(\mathbf{y})=
  \frac{\|f\|_{L^2(\mathbb{S}^d)}^2}{Z(d,n_0)}\neq0,
\end{equation*}
  which implies \eqref{eq:lim-infty}.
\end{proof}

\subsection{Hyperuniformity for small caps}\label{sec:small}
Using the definition of hyperuniformity together with \eqref{eq:var-harmonic}
we have
\begin{equation}\label{eq:hyper-harmonic}
  \frac{V(X_N,\phi_N)}{N\sigma(C(\cdot,\phi_N))}=
  \sum_{n=1}^\infty Z(d,n)\frac{a_n(\phi_N)^2}{\sigma(C(\cdot,\phi_N))}
  \frac1N\sum_{i,j=1}^N P_n^{(d)}(\langle\mathbf{x}_i,\mathbf{x}_j\rangle)\to0.
\end{equation}
By \eqref{eq:an-phi} the $n$-th Laplace coefficient of
\eqref{eq:hyper-harmonic} coefficient behaves like
\begin{equation*}
  \frac{a_n(\phi_N)^2}{\sigma(C(\cdot,\phi_N))}=
  \left(\frac{\gamma_d}dP_{n-1}^{(d+2)}(\cos\phi_N)\right)^2
  \frac{\sin(\phi_N)^{2d}}{\sigma(C(\cdot,\phi_N))}\asymp \phi_N^d
\end{equation*}
for $\phi_N\to0$ as assumed. Since $\phi_N$ is allowed to tend to $0$
arbitrarily slowly and all coefficients in \eqref{eq:hyper-harmonic} are
positive, this implies that
\begin{equation*}
  \limsup_{N\to\infty}\frac1N\sum_{i,j=1}^N
  P_n^{(d)}(\langle\mathbf{x}_i,\mathbf{x}_j\rangle)<\infty
\end{equation*}
for all $n\geq1$.

Using the fact that \eqref{eq:ud-p} is equivalent to uniform distribution of
$(X_N)_{N\in A}$ we have proved
\begin{theorem}
  Let $(X_N)_{N\in\mathbb{N}}$ be a sequence of point sets on the sphere which
  is hyperuniform for small caps. Then $(X_N)_{N\in\mathbb{N}}$ is
  asymptotically uniformly distributed.
\end{theorem}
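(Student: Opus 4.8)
The plan is to carry out in full the argument sketched in the paragraph preceding the statement: from the harmonic expansion \eqref{eq:hyper-harmonic} of the normalised number variance and the nonnegativity of all its summands one isolates, one degree at a time, the moments
\[
b_N:=\frac1N\sum_{i,j=1}^N P_n^{(d)}(\langle\mathbf{x}_i,\mathbf{x}_j\rangle),
\]
shows that each such moment stays bounded as $N\to\infty$, and then invokes the criterion \eqref{eq:ud-p} for uniform distribution.

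First I would fix $n\ge1$; by \eqref{eq:pos-def} we have $b_N\ge0$, and since every term in \eqref{eq:hyper-harmonic} is nonnegative, the single degree-$n$ term bounds the whole series from below. Using \eqref{eq:an-phi} and $P_{n-1}^{(d+2)}(\cos\phi_N)\to1$ as $\phi_N\to0$, the computation recorded before the statement gives, for every admissible $(\phi_N)$,
\[
\frac{V(X_N,\phi_N)}{N\sigma(C(\cdot,\phi_N))}\ \ge\ Z(d,n)\,\frac{a_n(\phi_N)^2}{\sigma(C(\cdot,\phi_N))}\,b_N\ \asymp\ \phi_N^{\,d}\,b_N,
\]
with a strictly positive implied constant. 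Hyperuniformity for small caps \eqref{eq:small} forces the left-hand side to $0$ along every admissible $(\phi_N)$. Hence it suffices to prove $\limsup_{N\to\infty}b_N<\infty$: once this is known, $\frac1{N^2}\sum_{i,j}P_n^{(d)}(\langle\mathbf{x}_i,\mathbf{x}_j\rangle)=b_N/N\to0$, which is exactly \eqref{eq:ud-p} and therefore equivalent to uniform distribution.

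The step I expect to be the main obstacle is converting the quantifier ``for every admissible $(\phi_N)$'' into the bound $\limsup_N b_N<\infty$, since the test sequence must satisfy both $\phi_N\to0$ and $\phi_N N^{1/d}\to\infty$ at once, and these pull in opposite directions. I would argue by contradiction. Suppose $b_{N_k}\to\infty$ along a subsequence $(N_k)$; define $\phi_N=b_N^{-1/(2d)}$ for $N\in\{N_k\}$ and $\phi_N=N^{-1/(2d)}$ otherwise. Then $\phi_N\to0$, so condition~(1) holds; the elementary bound $|P_n^{(d)}|\le1$ gives $b_N\le N$, whence $\phi_{N_k}N_k^{1/d}\ge N_k^{1/(2d)}\to\infty$ and also $N^{-1/(2d)}N^{1/d}\to\infty$, so condition~(2) holds for the whole sequence. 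But $\phi_{N_k}^{\,d}b_{N_k}=b_{N_k}^{1/2}\to\infty$, so by the displayed lower bound the normalised variance does not tend to $0$ as $N\to\infty$, contradicting \eqref{eq:small}. This establishes $\limsup_N b_N<\infty$ for every $n\ge1$ and completes the proof.
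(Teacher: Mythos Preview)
Your proposal is correct and follows essentially the same route as the paper: isolate a single degree-$n$ term from the nonnegative expansion \eqref{eq:hyper-harmonic}, use the asymptotic $a_n(\phi_N)^2/\sigma(C(\cdot,\phi_N))\asymp\phi_N^d$, and exploit the freedom in choosing $(\phi_N)$ to force $\limsup_N b_N<\infty$, whence \eqref{eq:ud-p}. The paper compresses the last step into the phrase ``$\phi_N$ is allowed to tend to $0$ arbitrarily slowly''; your explicit diagonal construction $\phi_{N_k}=b_{N_k}^{-1/(2d)}$ (padded by $N^{-1/(2d)}$ off the subsequence, and using $b_N\le N$ to verify admissibility) is exactly the argument this phrase is pointing at, spelled out in full.
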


Motivated by the analogous definition in the Euclidean case, we call the
function
\begin{equation*}
  s(n)=\lim_{N\to\infty}\frac1N\sum_{i,j=1}^N
  P_n^{(d)}(\langle\mathbf{x}_i,\mathbf{x}_j\rangle)
\end{equation*}
the \emph{spherical structure factor}, if the limit exists for all
$n\geq1$. Notice that by a diagonal argument, we can always achieve that all
such limits exist along some subsequence.
\begin{remark}\label{rem3a}
  As opposed to the case of hyperuniformity for large caps discussed in
  Remark~\ref{rem1a}, in the case of small caps the conclusion of uniform
  distribution is not directly obvious, because only ``small'' caps in the
  sense of \eqref{eq:small} are tested for the definition of uniform
  distribution.
\end{remark}
\subsection{Hyperuniformity for caps of threshold order}
\label{sec:threshold}
\begin{theorem}
  Let $(X_N)_{N\in\mathbb{N}}$ be a sequence of point sets on the sphere which
  is hyperuniform for caps of threshold order. Then $(X_N)_{N\in\mathbb{N}}$ is
  asymptotically uniformly distributed.
\end{theorem}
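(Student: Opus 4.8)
The plan is to imitate the small-cap argument, but now the scaling $\phi_N = tN^{-1/d}$ pins each Laplace coefficient to a definite size, and the decisive feature is the gap between the power $t^{2d}$ produced by a single harmonic term and the admissible growth $t^{d-1}$. First I would fix an integer $n_0\geq1$ and a parameter $t>0$, set $\phi_N=tN^{-1/d}$, and insert this into the harmonic expansion \eqref{eq:var-harmonic}. Since every term of that series is nonnegative (by \eqref{eq:pos-def}, together with $a_n(\phi)^2\geq0$ and $Z(d,n)>0$), I may discard all terms except the $n_0$-th to obtain the pointwise-in-$N$ lower bound
\[
  V(X_N,tN^{-1/d})\geq Z(d,n_0)\,a_{n_0}(tN^{-1/d})^2\sum_{i,j=1}^N
  P_{n_0}^{(d)}(\langle\mathbf{x}_i,\mathbf{x}_j\rangle).
\]

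Next I would determine the precise size of the prefactor. From \eqref{eq:an-phi} one has $a_{n_0}(\phi)^2=(\gamma_d/d)^2\sin(\phi)^{2d}\bigl(P_{n_0-1}^{(d+2)}(\cos\phi)\bigr)^2$; with $\phi=tN^{-1/d}\to0$ as $N\to\infty$ we get $\sin(\phi)^{2d}\sim t^{2d}N^{-2}$, while the Gegenbauer factor tends to $1$ by continuity and the normalisation $P_{n_0-1}^{(d+2)}(1)=1$. Hence $N^2Z(d,n_0)\,a_{n_0}(tN^{-1/d})^2\to\kappa_{n_0}(t):=(\gamma_d/d)^2Z(d,n_0)\,t^{2d}$, a strictly positive constant for fixed $t$ and $n_0$. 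Writing $L_{n_0}:=\limsup_{N\to\infty}\frac1{N^2}\sum_{i,j=1}^N P_{n_0}^{(d)}(\langle\mathbf{x}_i,\mathbf{x}_j\rangle)\geq0$ and taking $\limsup_N$ in the displayed inequality — using the elementary fact that a prefactor converging to a positive limit factors out of a $\limsup$ of nonnegative quantities — yields
\[
  \limsup_{N\to\infty}V(X_N,tN^{-1/d})\geq\kappa_{n_0}(t)\,L_{n_0}
  =(\gamma_d/d)^2Z(d,n_0)\,t^{2d}L_{n_0}.
\]

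Finally I would invoke the threshold hypothesis \eqref{eq:threshold}: there are constants $C$ and $t_0$ with $\limsup_{N\to\infty}V(X_N,tN^{-1/d})\leq Ct^{d-1}$ for all $t\geq t_0$. Combining this with the lower bound gives $(\gamma_d/d)^2Z(d,n_0)\,t^{2d}L_{n_0}\leq Ct^{d-1}$, so $L_{n_0}\leq C'\,t^{d-1-2d}=C'\,t^{-(d+1)}$ for all $t\geq t_0$. Letting $t\to\infty$ forces $L_{n_0}=0$, and since the summands are nonnegative this means that the limit in \eqref{eq:ud-p} exists and equals $0$ for every $n_0\geq1$. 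By the equivalence of \eqref{eq:ud-p} with uniform distribution, the sequence $(X_N)_{N\in\mathbb{N}}$ is asymptotically uniformly distributed.

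I expect the only genuinely delicate point to be the order of the two limiting processes: one must first send $N\to\infty$ at fixed $t$, which fixes the asymptotic size of $a_{n_0}(tN^{-1/d})^2\sim(\gamma_d/d)^2t^{2d}N^{-2}$ and legitimises factoring the convergent prefactor out of the $\limsup$, and only afterwards send $t\to\infty$ to exploit the exponent gap $2d>d-1$. The remaining ingredients — nonnegativity of the harmonic terms, the normalisation $P_{n_0-1}^{(d+2)}(1)=1$, and the small-angle asymptotics of $\sin(\phi)^{2d}$ — are routine.
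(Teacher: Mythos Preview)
Your proof is correct and follows essentially the same approach as the paper's own argument: drop all but one harmonic term from \eqref{eq:var-harmonic}, use the asymptotic $a_{n_0}(tN^{-1/d})^2\sim(\gamma_d/d)^2t^{2d}N^{-2}$ to isolate the Weyl sum, and then exploit the exponent gap $2d>d-1$ against the threshold hypothesis \eqref{eq:threshold}. Your discussion of the order of the two limits and of factoring a convergent positive prefactor out of the $\limsup$ is in fact more careful than the paper's terse presentation.
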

\begin{proof}
  We insert the definition of hyperuniformity for caps of threshold order into
  \eqref{eq:var-harmonic} to obtain
  \begin{equation*}
    V(X_N,tN^{-\frac1d})\geq a_n\left(tN^{-\frac1d}\right)^2Z(d,n)
    \sum_{i,j=1}^NP_n^{(d)}\left(\langle\mathbf{x}_i,\mathbf{x}_j\rangle\right).
  \end{equation*}
  Then \eqref{eq:an-phi} yields
  \begin{equation*}
    a_n\left(tN^{-\frac1d}\right)^2\sim\left(\frac{\gamma_d}d\right)^2t^{2d}N^{-2}
  \end{equation*}
for fixed $n\geq1$ and fixed $t>0$ and $N\to\infty$. Now by definition
\eqref{eq:threshold} we
have
\begin{equation*}
  \left(\frac{\gamma_d}d\right)^2t^{2d}Z(d,n)\limsup_{N\to\infty}\frac1{N^2}
  \sum_{i,j=1}^NP_n^{(d)}\left(\langle\mathbf{x}_i,\mathbf{x}_j\rangle\right)\leq
  \limsup_{N\to\infty}V(X_N,tN^{-\frac1d})=\mathcal{O}(t^{d-1}).  
\end{equation*}
This relation can only hold if
\begin{equation*}
  \limsup_{N\to\infty}\frac1{N^2}
  \sum_{i,j=1}^NP_n^{(d)}\left(\langle\mathbf{x}_i,\mathbf{x}_j\rangle\right)=0
\end{equation*}
for all $n\geq1$, which implies uniform distribution of the sequence $(X_N)_N$.
\end{proof}
\begin{remark}\label{rem3b}
  Similarly to the case of hyperuniformity for small caps the conclusion of
  uniform distribution of sequences of hyperuniform points sets for caps at
  threshold order is not obvious.
\end{remark}

\section{Hyperuniformity of QMC design sequences}\label{sec:QMC-designs}

A Quasi-Monte Carlo (QMC) method is an equal weight numerical
integration formula that, in contrast to Monte Carlo methods,
approximates the exact integral $\xctint(f)$ of a given continuous
real function $f$ on $\mathbb{S}^d$ using a \emph{deterministic} node
set $X_N = \{ \mathbf{x}_1, \dots, \mathbf{x}_N \} \subset
\mathbb{S}^d$:
\begin{equation*}
\xctint(f) := \int_{\mathbb{S}^d} f( \mathbf{x} ) \dd \sigma_d(
\mathbf{x} ) 
\approx \frac{1}{N} \sum_{k=1}^N f(\mathbf{x}_k) =: \numint[X_N](f).
\end{equation*}
The node set $X_N$ is chosen in a sensible way so as to guarantee
``small'' worst-case error of numerical integration,
\begin{equation*}
\WCE(\numint[X_N]; \mathbb{H}^s( \mathbb{S}^d ) ) := 
\sup \left\{ \big| \numint[X_N](f) - \xctint(f) 
\big| : f \in \mathbb{H}^s( \mathbb{S}^d ), \| f \|_{\mathbb{H}^s}
\leq 1 \right\}
\end{equation*}
with respect to a Sobolev space $\mathbb{H}^s(\mathbb{S}^d)$ over
$\mathbb{S}^d$ with smoothness index $s > \frac{d}{2}$.

Motivated by certain estimates for the worst-case error, the concept
of QMC design sequences was introduced in
\cite{Brauchart_Saff_Sloan+2014:qmc-designs}.
In the following we assume that $A$ is an infinite subset
of~$\mathbb{N}_0$.
Then a QMC design sequence $(X_N)_{N\in A}$ for $\mathbb{H}^s(
\mathbb{S}^d )$, $s > \frac{d}{2}$, is characterised by
\begin{equation} \label{eq:QMC.design.sequence.characterization}
\left| \numint[X_{N}]( f ) - \xctint( f ) \right| \leq 
\frac{c_{s,d}}{N^{\frac{s}{d}}} \, \| f \|_{\mathbb{H}^{s}} \quad 
\text{for all $f \in \mathbb{H}^{s}( \mathbb{S}^d )$.} 
\end{equation}
We note that the order of $N$ cannot be improved
\cite[Thm.~3]{Brauchart_Saff_Sloan+2014:qmc-designs}. It is shown in
\cite[Thm.~4]{Brauchart_Saff_Sloan+2014:qmc-designs} that a QMC design
sequence for $\mathbb{H}^s( \mathbb{S}^d )$, $s > \frac{d}{2}$, is
also a QMC design sequence for $\mathbb{H}^{s^\prime}( \mathbb{S}^d )$
for all $\frac{d}{2} < s^\prime < s$.
A fundamental unresolved problem is to determine the supremum $s^*$
(called the strength of the sequence) of those $s$ for which
\eqref{eq:QMC.design.sequence.characterization}
holds. 
We prove the following result.

\begin{theorem} \label{thm:QMC.design.sequences.are.hyperuniform} A
  QMC design sequence for $\mathbb{H}^{s}( \mathbb{S}^d )$ with $s
  \geq \frac{d + 1}{2}$ is hyperuniform for large caps, small caps,
  and caps at threshold order.
\end{theorem}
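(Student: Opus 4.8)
The plan is to reduce all three assertions to a single estimate on the number variance and to recognise that the critical smoothness is $s=\frac{d+1}2$. First I would use the nesting of QMC design sequences \cite[Thm.~4]{Brauchart_Saff_Sloan+2014:qmc-designs}: since $\frac d2<\frac{d+1}2\le s$, a QMC design sequence for $\mathbb{H}^s(\mathbb{S}^d)$ is in particular a QMC design sequence for $\mathbb{H}^{(d+1)/2}(\mathbb{S}^d)$. It therefore suffices to prove the theorem for $s=\frac{d+1}2$, which I assume from now on.

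Next I would recall the reproducing-kernel expansion of the worst-case error from \cite{Brauchart_Saff_Sloan+2014:qmc-designs}. Writing $\alpha_n=(1+n(n+d-1))^{s}\asymp n^{2s}$ for the Sobolev multipliers and abbreviating $S_n:=\sum_{i,j=1}^N P_n^{(d)}(\langle\mathbf{x}_i,\mathbf{x}_j\rangle)\ge0$ (nonnegativity is \eqref{eq:pos-def}), the squared worst-case error has the Laplace expansion
\[
  \WCE(\numint[X_N];\mathbb{H}^{s}(\mathbb{S}^d))^2
  =\sum_{n=1}^\infty\frac{Z(d,n)}{\alpha_n}\,\frac{S_n}{N^2},
\]
the $n=0$ term dropping out because equal-weight rules integrate constants exactly. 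Taking the supremum over the unit ball in \eqref{eq:QMC.design.sequence.characterization} gives $\WCE\le c_{s,d}N^{-s/d}$, hence $\sum_{n\ge1}\frac{Z(d,n)}{\alpha_n}S_n\le c_{s,d}^2N^{2-2s/d}$.

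The heart of the proof is a termwise comparison of this series with the variance series \eqref{eq:var-harmonic}. Factoring $a_n(\phi)^2Z(d,n)=\bigl(a_n(\phi)^2\alpha_n\bigr)\frac{Z(d,n)}{\alpha_n}$ and using $S_n\ge0$, I would pull out the supremum of the first factor:
\[
  V(X_N,\phi)=\sum_{n=1}^\infty a_n(\phi)^2Z(d,n)\,S_n
  \le\Bigl(\sup_{n\ge1}a_n(\phi)^2\alpha_n\Bigr)\sum_{n=1}^\infty\frac{Z(d,n)}{\alpha_n}S_n
  =\Bigl(\sup_{n\ge1}a_n(\phi)^2\alpha_n\Bigr)N^2\,\WCE^2.
\]
Here the choice $s=\frac{d+1}2$, i.e.\ $\alpha_n\asymp n^{d+1}$, is decisive: the uniform bound $a_n(\phi)^2=\mathcal{O}(\sin(\phi)^{d-1}/n^{d+1})$ recorded after \eqref{eq:Gegenbauer} yields $a_n(\phi)^2\alpha_n=\mathcal{O}(\sin(\phi)^{d-1})$ uniformly in $n$ and $\phi$, whereas for any larger $s$ this factor grows with $n$ and the domination breaks down. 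With $2-\frac{2s}d=1-\frac1d$ this produces the master estimate
\[
  V(X_N,\phi)\le C\,\sin(\phi)^{d-1}\,N^{1-\frac1d}\qquad\text{for all }\phi\in(0,\pi]\text{ and all }N.
\]

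Finally I would read off the three regimes. For fixed $\phi\in(0,\frac\pi2)$ this gives $V(X_N,\phi)=\mathcal{O}(N^{1-1/d})=o(N)$, proving \eqref{eq:large}. For $\phi_N\to0$ with $\phi_NN^{1/d}\to\infty$, using $\sin(\phi_N)\asymp\phi_N$ and $\sigma(C(\cdot,\phi_N))\asymp\phi_N^d$ from \eqref{eq:normalised.surface.cap} one obtains $V(X_N,\phi_N)/\bigl(N\sigma(C(\cdot,\phi_N))\bigr)=\mathcal{O}\bigl((\phi_NN^{1/d})^{-1}\bigr)\to0$, proving \eqref{eq:small}. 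For $\phi_N=tN^{-1/d}$ the powers of $N$ cancel exactly and $V(X_N,tN^{-1/d})=\mathcal{O}(t^{d-1})$ holds uniformly in $N$, proving \eqref{eq:threshold}. The main obstacle is the comparison step: securing a domination of the variance series by $N^2\WCE^2$ with a constant uniform in $n$, which is precisely what forces the threshold smoothness $s=\frac{d+1}2$. Once the master estimate is available, the three conclusions are immediate specialisations.
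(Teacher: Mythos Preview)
Your proof is correct and follows essentially the same approach as the paper: reduce to $s=\frac{d+1}2$ via the nesting property, compare the Laplace series for $V(X_N,\phi)$ termwise with that for $N^2\WCE^2$ using $a_n(\phi)^2=\mathcal{O}(\sin(\phi)^{d-1}/n^{d+1})$ and positive definiteness, and then specialise the resulting master bound $V(X_N,\phi)\ll\sin(\phi)^{d-1}N^{1-1/d}$ to the three regimes. The only cosmetic difference is that the paper isolates the comparison step as a separate lemma (Lemma~\ref{lem:number.variance.wce}), whereas you carry it out inline.
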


It is known \cite[Thm.~14]{Brauchart_Saff_Sloan+2014:qmc-designs} that
points that maximise their sum of mutual generalised Euclidean
distances,
% \begin{equation*}
$\sum_{j,k=1}^{N} \left|\mathbf{x}_j - \mathbf{x}_k \right|^{2\tau-d}$, 
% \end{equation*}
form a QMC design sequence $(X_{N,\tau}^*)_{N\in \mathbb{N}}$ for
$\mathbb{H}^{\tau}( \mathbb{S}^d )$ if $\tau \in (\frac{d}{2},
\frac{d}{2} + 1 )$; i.e.,
\begin{equation*}
  \left| \numint[X_{N,\tau}^*]( f ) - \xctint( f ) \right| \leq 
\frac{c_{s,d}}{N^{\frac{s}{d}}} \, \| f \|_{\mathbb{H}^s} \qquad 
\text{for all $f \in \mathbb{H}^s( \mathbb{S}^d )$ 
and all $\frac{d}{2} < s \leq \tau$,} 
\end{equation*}
whereas a sequence $( Z_{N_t} )_{t \in \mathbb{N}}$ of spherical
$t$-designs with exactly the optimal order of points, $N_t \asymp
t^d$, has the remarkable property
\cite[Thm.~6]{Brauchart_Saff_Sloan+2014:qmc-designs} that
\begin{equation*}
\left| \numint[Z_{N_t}]( f ) - \xctint( f ) \right| \leq 
\frac{c_{s,d}}{N_t^{\frac{s}{d}}} \, \| f \|_{\mathbb{H}^s} \quad 
\text{for all $f \in \mathbb{H}^s( \mathbb{S}^d )$ and {\bf all} 
$s > \frac{d}{2}$}
\end{equation*}
and, therefore, $(Z_{N_t})_{t \in \mathbb{N}}$ is a QMC design
sequence for $\mathbb{H}^{s}( \mathbb{S}^d )$ for every $s >
\frac{d}{2}$.
As corollaries to
Theorem~\ref{thm:QMC.design.sequences.are.hyperuniform} we obtain

\begin{corollary} \label{cor:hyperuniform.sum.of.distance.point.sets}
  Let $\tau \in (\frac{d}{2}, \frac{d}{2} + 1 )$. A sequence
  $(X_{N,\tau}^*)_{N\in \mathbb{N}}$ of $N$-point sets that maximise
  the sum $\sum_{j,k=1}^{N} \left|\mathbf{x}_j - \mathbf{x}_k
  \right|^{2\tau-d}$ is hyperuniform for large caps, small caps, and
  caps at threshold order.
\end{corollary}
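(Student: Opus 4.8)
The plan is to read the corollary as a direct consequence of Theorem~\ref{thm:QMC.design.sequences.are.hyperuniform}, so that the only real work is to exhibit the extremal configurations $(X_{N,\tau}^*)_{N\in\mathbb{N}}$ as QMC design sequences of the smoothness required by that theorem. First I would invoke \cite[Thm.~14]{Brauchart_Saff_Sloan+2014:qmc-designs}, quoted above: for $\tau\in(\frac{d}{2},\frac{d}{2}+1)$ the maximisers of $\sum_{j,k=1}^N|\mathbf{x}_j-\mathbf{x}_k|^{2\tau-d}$ satisfy the characterisation \eqref{eq:QMC.design.sequence.characterization} with $s=\tau$, i.e.\ they form a QMC design sequence for $\mathbb{H}^{\tau}(\mathbb{S}^d)$. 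This is the single external input; everything else is an application of the machinery already set up around \eqref{eq:var-harmonic}.

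Second, on the upper part $\tau\in[\frac{d+1}{2},\frac{d}{2}+1)$ of the admissible range the guaranteed smoothness $s=\tau$ already meets the hypothesis $s\geq\frac{d+1}{2}$ of Theorem~\ref{thm:QMC.design.sequences.are.hyperuniform}. I would therefore apply that theorem verbatim with $s=\tau$ and conclude that $(X_{N,\tau}^*)_N$ is hyperuniform for large caps \eqref{eq:large}, for small caps \eqref{eq:small}, and for caps at threshold order \eqref{eq:threshold}. This settles the corollary on $[\frac{d+1}{2},\frac{d}{2}+1)$ with no extra computation, and it is the step I expect to be completely routine.

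The hard part will be the complementary sub-interval $\tau\in(\frac{d}{2},\frac{d+1}{2})$, where the smoothness $s=\tau$ supplied by the extremal property falls strictly below the threshold $\frac{d+1}{2}$, and where the downward closure \cite[Thm.~4]{Brauchart_Saff_Sloan+2014:qmc-designs} is useless since it only \emph{lowers} the smoothness index. The large-cap regime is not affected: inspecting the number variance through \eqref{eq:var-harmonic} one sees that the bound forcing $V(X_N,\phi)=o(N)$ for fixed $\phi$ uses only $s>\frac{d}{2}$, so hyperuniformity for large caps persists for every $\tau\in(\frac{d}{2},\frac{d}{2}+1)$. For small caps and for caps at threshold order, however, the frequencies $n\asymp 1/\phi_N$ dominate the series in \eqref{eq:var-harmonic}, and controlling their contribution genuinely demands $s\geq\frac{d+1}{2}$; the theorem cannot then be quoted directly. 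To close this gap I would pursue one of two routes: either establish a sharper lower bound on the strength $s^*$ of these energy extremisers (the unresolved problem flagged above, which would immediately finish the argument once $s^*\geq\frac{d+1}{2}$), or bypass the strength question entirely by estimating the number variance by hand from the Gegenbauer expansion of the Riesz kernel $|\mathbf{x}-\mathbf{y}|^{2\tau-d}$, using that the maximiser minimises the defect from the continuous energy frequency-by-frequency. I expect the second route to be the decisive one, and the delicate point there is to get a dimensionally correct bound on $\sum_{i,j}P_n^{(d)}(\langle\mathbf{x}_i,\mathbf{x}_j\rangle)$ that is uniform across the critical band of degrees $n\asymp N^{1/d}$.
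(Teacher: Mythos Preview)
The paper offers no proof beyond labelling this a corollary of Theorem~\ref{thm:QMC.design.sequences.are.hyperuniform}: one cites \cite[Thm.~14]{Brauchart_Saff_Sloan+2014:qmc-designs} to obtain the QMC design property for $\mathbb{H}^\tau(\mathbb{S}^d)$ and then applies Theorem~\ref{thm:QMC.design.sequences.are.hyperuniform} directly. Your treatment of the sub-range $\tau\in[\frac{d+1}{2},\frac{d}{2}+1)$ is precisely this, so on that part your proposal and the paper coincide.

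You have, however, spotted something the paper glosses over. For $\tau\in(\frac{d}{2},\frac{d+1}{2})$ the cited result only guarantees the QMC design property at smoothness $\tau<\frac{d+1}{2}$, and Theorem~\ref{thm:QMC.design.sequences.are.hyperuniform} as stated does not apply; the paper supplies no separate argument for this sub-interval. Your remark that large-cap hyperuniformity still follows from any $s>\frac{d}{2}$ is correct (for fixed $\phi$ the ratio $a_n(\phi)^2/b_n(s)$ is bounded in $n$, so $V(X_N,\phi)\ll_\phi N^{2-2s/d}=o(N)$), but the small-cap and threshold-order conclusions genuinely need control at the scale $n\asymp N^{1/d}$, which the $\mathbb{H}^\tau$ bound with $\tau<\frac{d+1}{2}$ does not provide. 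Your two proposed routes---upgrading the strength $s^*$, or a direct frequency-by-frequency estimate from the Riesz kernel expansion---go beyond what the paper does; the paper simply does not address the gap. In short, your proposal is more careful than the paper here: as written, the corollary's full range $\tau\in(\frac{d}{2},\frac{d}{2}+1)$ is not actually justified by the paper's own machinery, only the upper half is.
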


\begin{corollary}
  A sequence $(X_N)_{N\in A}$ of spherical $t(N)$-designs with $t(N)
  \geq c_d \, N^{\frac1d}$, $N \in A$, for some $c_d>0$ is
  hyperuniform for large caps, small caps, and caps at threshold
  order.
\end{corollary}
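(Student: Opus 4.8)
The plan is to deduce this statement directly from Theorem~\ref{thm:QMC.design.sequences.are.hyperuniform}, by showing that the growth condition $t(N)\geq c_d\,N^{1/d}$ forces the sequence $(X_N)_{N\in A}$ to be a sequence of spherical designs of \emph{optimal} growth order, and hence a QMC design sequence for the Sobolev space $\mathbb{H}^{(d+1)/2}(\mathbb{S}^d)$. Once that identification is made, the three hyperuniformity regimes follow with no further work.

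First I would establish that $N\asymp t(N)^d$. The assumption $t(N)\geq c_d\,N^{1/d}$ immediately gives one inequality, $N\leq c_d^{-d}\,t(N)^d$, so that $N=\mathcal{O}(t(N)^d)$. For the reverse inequality I would invoke the classical lower bound of Delsarte, Goethals and Seidel, which asserts that any spherical $t$-design on $\mathbb{S}^d$ has at least $\asymp t^d$ points; applied with $t=t(N)$ this yields $t(N)^d=\mathcal{O}(N)$. Combining the two bounds gives $N\asymp t(N)^d$, which is exactly the optimal order of points for spherical designs. I would also note that $t(N)\geq c_d\,N^{1/d}\to\infty$ as $N\to\infty$ through $A$, so the degrees of the designs indeed tend to infinity.

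Next I would invoke the result quoted in the excerpt from \cite[Thm.~6]{Brauchart_Saff_Sloan+2014:qmc-designs}: a sequence of spherical designs whose number of points is of optimal order $N\asymp t^d$ is a QMC design sequence for $\mathbb{H}^s(\mathbb{S}^d)$ for every $s>\tfrac d2$. After the harmless re-indexing from the design degree to the cardinality $N$ established in the previous step, this applies to $(X_N)_{N\in A}$; choosing $s=\tfrac{d+1}2>\tfrac d2$ shows that $(X_N)_{N\in A}$ is a QMC design sequence for $\mathbb{H}^{(d+1)/2}(\mathbb{S}^d)$. Since $s=\tfrac{d+1}2\geq\tfrac{d+1}2$, Theorem~\ref{thm:QMC.design.sequences.are.hyperuniform} then applies directly and yields hyperuniformity for large caps, for small caps, and for caps at threshold order.

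I do not expect a genuine obstacle here, since the entire content is carried by the two cited theorems; the only point requiring care is the first step, where one must recognise that $t(N)\geq c_d\,N^{1/d}$ is precisely the optimal-order condition (modulo the lower bound on the number of points) that makes \cite[Thm.~6]{Brauchart_Saff_Sloan+2014:qmc-designs} applicable. As a more self-contained alternative, one could start from the defining property of a $t(N)$-design, namely $\sum_{i,j=1}^N P_n^{(d)}(\langle\mathbf{x}_i,\mathbf{x}_j\rangle)=0$ for $1\leq n\leq t(N)$, and truncate the variance series \eqref{eq:var-harmonic} at $n=t(N)$. The warning is that the naive tail estimate using only $\sum_{i,j=1}^N P_n^{(d)}\leq N^2$ together with the bounds on $a_n(\phi)^2$ and $Z(d,n)$ gives $V(X_N,\phi)=\mathcal{O}(N^{2-1/d})$, which is far weaker than $o(N)$; so this route would in any case have to borrow the finer control on the structure factors $\tfrac1{N^2}\sum_{i,j}P_n^{(d)}$ supplied by \cite[Thm.~6]{Brauchart_Saff_Sloan+2014:qmc-designs}, making the reduction to Theorem~\ref{thm:QMC.design.sequences.are.hyperuniform} the cleaner path.
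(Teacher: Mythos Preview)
Your proposal is correct and follows exactly the route the paper intends: the corollary is stated immediately after the observation (via \cite[Thm.~6]{Brauchart_Saff_Sloan+2014:qmc-designs}) that spherical $t$-designs of optimal growth order are QMC design sequences for every $\mathbb{H}^s(\mathbb{S}^d)$ with $s>\tfrac d2$, and is meant to be read as a direct application of Theorem~\ref{thm:QMC.design.sequences.are.hyperuniform}. Your added step invoking the Delsarte--Goethals--Seidel lower bound to upgrade $t(N)\geq c_d N^{1/d}$ to $N\asymp t(N)^d$ is a welcome clarification that the paper leaves implicit.
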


The Sobolev space $\mathbb{H}^s(\mathbb{S}^d)$ consists of
$\mathbb{L}_2$-functions on $\mathbb{S}^d$ with finite Sobolev norm
\begin{equation*}
\| f \|_{\mathbb{H}^s} := \sqrt{\langle f, f
  \rangle_{\mathbb{H}^s(\mathbb{S}^d)}} 
= \sqrt{\sum_{n=0}^\infty 
\frac{\left\| Y_n[f] \right\|_{L^2( \mathbb{S}^d )}^2}{b_n(s)}},
\end{equation*}
where $Y_n[f]$, $n \in \mathbb{N}_0$, are the projections
\begin{equation*}
  Y_n[f]( \mathbf{x} ) := \int_{\mathbb{S}^d} Z(d, n) P_n^{(d)}( \langle \mathbf{x}, \mathbf{y} \rangle ) \, f( \mathbf{y} ) \, \dd \sigma( \mathbf{y} ), \qquad \mathbf{x} \in \mathbb{S}^d,
\end{equation*}
and $( b_n(s) )_{n \in \mathbb{N}_0}$ can be any fixed sequence of
positive real numbers satisfying
\begin{equation} \label{eq:sequenceassumption}
b_n(s) \asymp \left( 1 + n \right)^{-2s}. 
\end{equation}
Since the point-evaluation functional is a bounded operator on
$\mathbb{H}^s(\mathbb{S}^d)$ whenever $s>\frac{d}{2}$, the Riesz
representation theorem assures the existence of a reproducing kernel
for $\mathbb{H}^s(\mathbb{S}^d)$. It can be readily verified that the
zonal kernel
\begin{equation*}
K^{(s)}(\mathbf{x},\mathbf{y}) = 
\sum_{n=0}^\infty b_n(s) Z(d,n) P_n^{(d)}(\langle\mathbf{x},\mathbf{y}\rangle)
\end{equation*}
has the reproducing kernel properties
\begin{equation*}
  K^{(s)}({\mathbf \cdot},{\mathbf x}) \in \mathbb{H}^s(\mathbb{S}^d),
  \quad 
\mathbf{x} \in \mathbb{S}^d, \qquad 
\langle f, K^{(s)}({\mathbf \cdot},{\mathbf x})
\rangle_{\mathbb{H}^s(\mathbb{S}^d)} = 
f({\mathbf x}), \quad \mathbf{x} \in \mathbb{S}^d, 
f \in \mathbb{H}^s(\mathbb{S}^d).
\end{equation*}
Thus, reproducing kernel Hilbert space techniques (see \cite{Hi1998}
for the case of the unit cube) provide the means to compute the
worst-case error.
Standard arguments (see \cite{Brauchart_Saff_Sloan+2014:qmc-designs}) yield
\begin{equation} \label{eq:wce2}
\left[ \WCE( \numint[X_{N}]; \mathbb{H}^s( \mathbb{S}^d ) ) \right]^2
= 
\frac{1}{N^2} \sum_{i,j=1}^{N} \sum_{n=1}^\infty b_n(s) Z(d,n) 
P_n^{(d)}( \langle \mathbf{x}_i, \mathbf{x}_j \rangle).
\end{equation}

We exploit the flexibility in the choice of the sequence $( b_n(s)
)_{n \in \mathbb{N}_0}$ defining reproducing kernel, Sobolev norm, and
worst-case error to connect the Laplace-Fourier expansion of the
number variance given in \eqref{eq:var-harmonic} with an appropriately
chosen worst-case error.

\begin{lemma} \label{lem:number.variance.wce}
The number variance satisfies 
\begin{equation}
V(X_N, \phi) \ll \left( \sin \phi \right)^{d-1} N^2 
\left[ \WCE( \numint[X_{N}]; \mathbb{H}^{\frac{d+1}{2}}( \mathbb{S}^d))\right]^2
\end{equation}
for any $N$-point set $X_N \subset \mathbb{S}^d$ and opening angle
${\phi \in (0, \frac{\pi}{2} )}$.
\end{lemma}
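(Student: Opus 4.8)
The plan is to compare the Laplace--Fourier expansions of the two sides of the inequality term by term. By \eqref{eq:var-harmonic} the number variance is
\begin{equation*}
  V(X_N,\phi)=\sum_{n=1}^\infty a_n(\phi)^2\,Z(d,n)\sum_{i,j=1}^N P_n^{(d)}(\langle\mathbf{x}_i,\mathbf{x}_j\rangle),
\end{equation*}
while \eqref{eq:wce2} with $s=\frac{d+1}2$ gives
\begin{equation*}
  N^2\left[\WCE\bigl(\numint[X_N];\mathbb{H}^{\frac{d+1}2}(\mathbb{S}^d)\bigr)\right]^2=\sum_{n=1}^\infty b_n\bigl(\tfrac{d+1}2\bigr)\,Z(d,n)\sum_{i,j=1}^N P_n^{(d)}(\langle\mathbf{x}_i,\mathbf{x}_j\rangle).
\end{equation*}
Both series carry the common factor $Z(d,n)\sum_{i,j}P_n^{(d)}(\langle\mathbf{x}_i,\mathbf{x}_j\rangle)$, which is nonnegative by the positive definiteness \eqref{eq:pos-def}. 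I would therefore reduce the lemma to the single uniform coefficient estimate
\begin{equation*}
  a_n(\phi)^2\ll(\sin\phi)^{d-1}\,b_n\bigl(\tfrac{d+1}2\bigr),
\end{equation*}
with an implied constant independent of $n\geq1$ and of $\phi\in(0,\frac\pi2)$; multiplying by the common nonnegative factor and summing over $n$ then yields the assertion directly.

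For the coefficient estimate I would invoke the bound $a_n(\phi)^2=\mathcal{O}\bigl(\sin(\phi)^{d-1}/n^{d+1}\bigr)$ recorded after \eqref{eq:Gegenbauer}, which holds uniformly in $\phi\in[0,\pi]$ and follows by inserting the Gegenbauer estimate \eqref{eq:Gegenbauer} for $P_{n-1}^{(d+2)}(\cos\phi)$ into the closed form \eqref{eq:an-phi}. On the other side, the defining asymptotic \eqref{eq:sequenceassumption} at $s=\frac{d+1}2$ reads $b_n(\frac{d+1}2)\asymp(1+n)^{-(d+1)}$. Dividing, the quotient $a_n(\phi)^2\big/\bigl((\sin\phi)^{d-1}b_n(\frac{d+1}2)\bigr)$ is $\mathcal{O}\bigl((1+n)^{d+1}/n^{d+1}\bigr)=\mathcal{O}(1)$, uniformly in $n$ and $\phi$, which is exactly the required bound.

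The substance of the argument, and the reason the specific smoothness index $\frac{d+1}2$ appears, is precisely this numerical coincidence: at the critical value $s=\frac{d+1}2$ the Sobolev weights $b_n$ decay at the same rate $n^{-(d+1)}$ as the squared cap coefficients $a_n(\phi)^2$, up to the separable factor $(\sin\phi)^{d-1}$. There is no genuine analytic obstacle once the two estimates are in hand; the only point needing a moment's care is the smallest index, where the Gegenbauer bound degenerates. For $n=1$ one uses instead $P_0^{(d+2)}\equiv1$, giving $a_1(\phi)^2=(\gamma_d/d)^2\sin(\phi)^{2d}\leq C\sin(\phi)^{d-1}$ on $(0,\frac\pi2)$ while $b_1(\frac{d+1}2)$ is bounded below, so the estimate persists. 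Since \eqref{eq:var-harmonic} and \eqref{eq:wce2} both converge absolutely and uniformly (as already established for the variance series, and because $s>\frac d2$ for the worst-case error), the termwise comparison may be summed, completing the proof.
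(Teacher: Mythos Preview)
Your proof is correct and follows essentially the same route as the paper: both compare the Laplace--Fourier expansions \eqref{eq:var-harmonic} and \eqref{eq:wce2} termwise, using the uniform bound $a_n(\phi)^2\ll(\sin\phi)^{d-1}(1+n)^{-(d+1)}$ obtained from \eqref{eq:Gegenbauer} together with \eqref{eq:sequenceassumption} at $s=\frac{d+1}2$, and invoke positive definiteness \eqref{eq:pos-def} to justify the termwise comparison. Your treatment is somewhat more explicit than the paper's---you spell out the $n=1$ case and the convergence justification---but there is no substantive difference in method.
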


\begin{proof}
  Using the estimate \eqref{eq:Gegenbauer}, the coefficients in
  \eqref{eq:var-harmonic} satisfy the relation
\begin{equation*}
a_n( \phi )^2 \ll \frac{\left( \sin \phi \right)^{d-1}}{n^{d+1}}
\asymp 
\frac{\left( \sin \phi \right)^{d-1}}{( 1 + n )^{d+1}}.
\end{equation*}
The positive definiteness of the kernel function \eqref{eq:g_phi} yields
\begin{equation*}
V(X_N,\phi) \ll \left( \sin \phi \right)^{d-1}
\sum_{i,j=1}^N\sum_{n=1}^\infty 
\left( 1 + n \right)^{-(d+1)} Z(d,n)
  P_n^{(d)}(\langle\mathbf{x}_i,\mathbf{x}_j\rangle).
\end{equation*}
Comparison with \eqref{eq:wce2} while taking into account
\eqref{eq:sequenceassumption} gives the result.
\end{proof}

\begin{remark}
  It is interesting that the Sobolev space
  $\mathbb{H}^{\frac{d+1}{2}}( \mathbb{S}^d )$ plays such a special
  role: When endowed with the reproducing kernel $1 -
  \frac{\gamma_d}{d} | \mathbf{x} - \mathbf{y} |$ for $\mathbf{x},
  \mathbf{y} \in \mathbb{S}^d$, the worst-case error satisfies the
  following invariance principle
  \cite{Brauchart_Dick2013:stolarskys_invariance} (see
  \cite{Sloan_Womersley2004:extremal_sphere,BiLa2017,Brauchart_Dick2013:characterization_sobolev_spaces,BrDiFa2015}
  for generalisations)
\begin{equation*}
\frac{1}{N^2} \sum_{j,k=1}^{N} \left|\mathbf{x}_j - \mathbf{x}_k
\right| + 
\frac{d}{\gamma_d} \left[\WCE(\numint[X_{N}]; 
\mathbb{H}^s( \mathbb{S}^d ) ) \right]^{2} = 
\int_{\mathbb{S}^d} \int_{\mathbb{S}^d}
\left|\mathbf{x}-\mathbf{y}\right| 
\dd \sigma(\mathbf{x}) \,\dd \sigma(\mathbf{y}),
\end{equation*}
which is equivalent \cite{Brauchart_Dick2013:stolarskys_invariance}
with Stolarsky's invariance
principle~\cite{Stolarsky1973:sums_distances_ii}, where the place of
the worst-case error is taken by the $L^2$-discrepancy given in
\eqref{eq:discr-L2}. Hence, an $N$-point system with maximal sum of
all mutual Euclidean distances is both a node set for a QMC method
that minimises the worst-case error in the above setting and a point
set with smallest possible $L^2$-discrepancy among all $N$-point sets
on $\mathbb{S}^d$.
A sequence of such maximal sum-of-distance $N$-point sets as $N \to
\infty$ is a QMC design sequence for at least
$\mathbb{H}^{\frac{d+1}{2}}( \mathbb{S}^d )$ with yet unknown strength
$s^*$ and thus is hyperuniform for large caps, small caps, and caps at
threshold order
(Corollary~\ref{cor:hyperuniform.sum.of.distance.point.sets}).
For the Weyl sums we get (cf. Remark~\ref{rmk:convergence.Weyl.sums})
for every fixed $n \in \mathbb{N}$ the limit relation
\begin{equation*}
\lim_{N \to \infty} N^{-1 + \frac{1}{d} - \eps} \sum_{i,j=1}^{N}
P_n^{(d)}( \langle \mathbf{x}_{i}, \mathbf{x}_{j} \rangle) = 0 
\qquad \text{for all sufficiently small $\eps > 0$.} 
\end{equation*}
\end{remark}

We are ready to prove Theorem~\ref{thm:QMC.design.sequences.are.hyperuniform}.

\begin{proof}[Proof of Theorem~\ref{thm:QMC.design.sequences.are.hyperuniform}]
  Let $(X_N)_{N \in A}$ be a QMC design sequence for $\mathbb{H}^s(
  \mathbb{S}^d )$ with $s \geq \frac{d+1}{2}$. Then, by
  \cite[Theorem~4]{Brauchart_Saff_Sloan+2014:qmc-designs}, it is also
  a QMC design sequence for $\mathbb{H}^{\frac{d+1}{2}}( \mathbb{S}^d
  )$; i.e.,
\begin{equation*}
\left[ \WCE( \numint[X_{N}]; \mathbb{H}^{\frac{d+1}{2}}( \mathbb{S}^d
  ) ) \right]^2 \leq c \, N^{-\frac{d+1}{d}}
\end{equation*}
for some constant $c > 0$. By Lemma~\ref{lem:number.variance.wce},
\begin{equation} \label{eq:number.variance.wce.estimate} V(X_N, \phi)
  \ll \left( \sin \phi \right)^{d-1} N^2 \, N^{-\frac{d+1}{d}} =
  \left( \sin \phi \right)^{d-1} N^{1-\frac{1}{d}}
\end{equation}
for the $X_N$ of the QMC design sequence $(X_N)_{N \in A}$ and any
opening angle ${\phi \in (0, \frac{\pi}{2} )}$.

{\bf (i) Large cap regime:} Let $\phi \in ( 0, \frac{\pi}{2} )$. Then,
by \eqref{eq:number.variance.wce.estimate},
\begin{equation*}
\begin{split}
  \frac{1}{N} \, V(X_N, \phi) \ll \left( \sin \phi \right)^{d-1}
  N^{-\frac{1}{d}} \to 0 \qquad \text{as $N \to \infty$.}
\end{split}
\end{equation*}
Consequently, for all $\phi \in ( 0, \frac{\pi}{2} )$,
\begin{equation*}
V(X_N, \phi) = o( N ) \qquad \text{as $N \to \infty$,}
\end{equation*}
and $(X_N)_{N \in A}$ is hyperuniform for large caps.

{\bf (ii) Small cap regime:} 
Let $(\phi_N)_{N \in A}$ be a sequence of radii satisfying $\phi_N \to
0$ and $N \sigma( C( \cdot, \phi_N ) ) \to \infty$ as $N \to
\infty$. Then, by \eqref{eq:number.variance.wce.estimate} and
\eqref{eq:normalised.surface.cap},
\begin{equation*}
  \frac{V(X_N, \phi_N)}{N \sigma( C( \cdot, \phi_N ) )} \ll 
\frac{\left( N \left( \sin \phi_N \right)^{d} \right)^{\frac{d-1}{d}}}
{N \left( \sin \phi_N \right)^{d}} 
\ll \Big( N \, \sigma( C( \cdot, \phi_N ) ) \Big)^{-\frac{1}{d}} \to 0 
\qquad \text{as $N \to \infty$;}
\end{equation*}
thus, $(X_N)_{N \in A}$ is hyperuniform for small caps.

{\bf (iii) threshold regime:} Suppose $( \phi_N )_{N \in A}$, $\phi_N
\in (0, \frac{\pi}{2} )$ such that $\phi_N = t \, N^{-\frac{1}{d}}$,
$t > 0$.
By \eqref{eq:number.variance.wce.estimate}, 
\begin{align*}
V(X_{N}, \phi_{N}) 
&\ll \left( \frac{\sin \phi_{N}}{\phi_{N}} \right)^{d-1} 
\left( \phi_{N} \right)^{d-1} N^{1-\frac{1}{d}} \\
&= \left( t \, N^{-\frac{1}{d}} \right)^{d-1} N^{1-\frac{1}{d}} \\
&= t^{d-1} \qquad \text{as $N \to \infty$.}
\end{align*}
The implied constant does not depend on $t$. Since $t> 0$ was
arbitrary,
\begin{equation*}
\limsup_{\substack{N\to\infty \\ N \in A}} V\Big(X_N, t \,
N^{-\frac{1}{d}} \Big) 
= \mathcal{O}( t^{d-1} ) \qquad \text{as $t \to \infty$}
\end{equation*}
and $(X_N)_{N \in A}$ is hyperuniform for caps at threshold order.
\end{proof}

\begin{remark} \label{rmk:convergence.Weyl.sums} Any QMC design
  sequence $(X_N)_{N \in A}$ for $\mathbb{H}^{s}( \mathbb{S}^d )$, $s
  \geq \frac{d+1}{2}$, is hyperuniform for large caps and thus, by
  Theorem~\ref{thm:limit.hyperuniformity.large.caps}, satisfies the
  property
\begin{equation*}
\lim_{\substack{N\to\infty \\ N \in A}} 
\frac{1}{N} \sum_{i,j=1}^N P_n^{(d)}( \langle \mathbf{x}_{i},
\mathbf{x}_{j} \rangle ) = 0 \qquad \text{for every $n \in
  \mathbb{N}$.}
\end{equation*}
As QMC design sequences are characterised by a bound on the worst-case
error, we can use such bounds to quantify the convergence of Weyl sums
along the sequence.
More generally, let $(X_N)_{N \in A}$ be a sequence of $N$-point sets
on $\mathbb{S}^d$ with finite strength $s^* > \frac{d}{2}$; i.e.,
\begin{equation} \label{eq:strength.estimate}
\left| \numint[X_{N}]( f ) - \xctint( f ) \right| \leq 
\frac{c_{s,d}}{N^{\frac{s}{d}}} \, \| f \|_{\mathbb{H}^{s}} 
\quad \text{for all $f \in \mathbb{H}^{s}( \mathbb{S}^d )$ 
and all $\frac{d}{2} < s < s^*$} 
\end{equation}
and this relation fails if $s > s^*$. Then for every fixed $n \in
\mathbb{N}$, we get the limit relation
\begin{equation} \label{eq:Weyl.sum.estimate.non-varying.n}
\lim_{\substack{N \to \infty \\ N \in A}} N^{-1 + \frac{2s^* - d \, (
    1 + \eps )}{d}} \sum_{i,j=1}^{N}  P_n^{(d)}( \langle
\mathbf{x}_{i}, \mathbf{x}_{j} \rangle) = 0 \qquad \text{for all
  sufficiently small $\eps > 0$,}
\end{equation}
which follows from the following estimate that combines
\eqref{eq:wce2} and \eqref{eq:strength.estimate}: for $s = s^* -
\frac{d}{4} \, \eps$,
\begin{equation*}
0 \leq \sum_{n=1}^\infty b_n(s) \, Z(d,n) \, N^{\frac{2s^*}{d} - 2 -
  \eps} \sum_{i,j=1}^{N}  P_n^{(d)}( \langle \mathbf{x}_{i},
\mathbf{x}_{j} \rangle) \leq \frac{c_{s,d}^2}{N^{\frac{\eps}{2}}}
\end{equation*}
holds for all sufficiently small but fixed $\eps > 0$. 
Thus, the critical exponent 
\begin{equation*}
-1 + \frac{2s^*-d}{d}
\end{equation*}
of a given sequence of $N$-point sets is limited by its strength. 
The connection between number variance and worst-case error given in
Lemma~\ref{lem:number.variance.wce} indicates that sequences with
strength $s^* \geq \frac{d+1}{2}$, where the critical exponent
satisfies
\begin{equation*}
-1 + \frac{2s^*-d}{d} \geq -1 + \frac{1}{d},
\end{equation*}
are of particular interest.

We conclude this remark by considering the case when $n$ in
\eqref{eq:Weyl.sum.estimate.non-varying.n} is not fixed. Assume $n
\leq c \, N^{\frac{\alpha}{d}} \, \Psi( N )$, $N \in A$, for some $c_d
> 0$, $\alpha \in \mathbb{R}$, and $\Psi$ such that $\Psi( N ) \to 0$
and $N^{\frac{\alpha}{d}} \, \Psi( N ) \to \infty$ as $N \to \infty$.
Then, by \eqref{eq:sequenceassumption} and $Z(d, n ) \asymp n^{d-1}$,
we get for $\frac{d}{2} < s < s^*$,
\begin{equation*}
b_n( s ) \, Z(d,n) \, N^{\frac{2s}{d} - 2} \asymp 
n^{-( 2s - d + 1 )} \, N^{-1 + \frac{2s - d}{d}} \gg  
N^{-1 - \frac{1}{d} + \frac{1-\alpha}{d} \, ( 2s - d + 1 )}\left(\Psi(
  N )\right)^{-( 2s - d + 1 )}
\end{equation*}
and thus
\begin{equation} \label{eq:Weyl.sum.estimate.varying.n} N^{-1 -
    \frac{1}{d} + \frac{1-\alpha}{d} \, ( 2s - d + 1 )} \,
  \sum_{i,j=1}^{N} P_{n}^{(d)}( \langle \mathbf{x}_{i}, \mathbf{x}_{j}
  \rangle) = \mathcal{O}( \left( \Psi( N ) \right)^{2s - d + 1} )
\end{equation}
uniformly in $n \leq c \, N^{\frac{\alpha}{d}} \, \Psi( N )$, $N \in
A$. (The implied constant is independent of $n$ and~$N$.)
The function $\Psi( N )$ may tend to zero arbitrarily slow as $N \to
\infty$.
The value of $\alpha$ in the power $N^{\frac{\alpha}{d}}$ determines
three regimes of growth of the bound of $n$.
If the bound for $n$ does not grow too fast (i.e., $\alpha \in (0,
1)$), the parameter $s$ effectively enlarges the exponent of~$N$. A
sequence of higher strength $s^*$ allows for larger powers of $N$. The
effective exponent is then strictly larger than
$-1-\frac{\alpha}{d}$. For the critical value $\alpha = 1$, the
exponent of $N$ does not depend on $s$ at all. It is always
$-1-\frac{1}{d}$. If the bound for $n$ grows too fast (i.e., $\alpha >
1$), then the effective exponent of $N$ is strictly smaller than
$-1-\frac{\alpha}{d}$.
\end{remark}

\begin{remark}
A sequence $(Z_N)_{N \in A}$ with infinite strength has the property
\begin{equation*}
\lim_{\substack{N \to \infty \\ N \in A}} N^{-1+\beta} \sum_{i,j=1}^{N}  P_n^{(d)}( \langle \mathbf{z}_{i}, \mathbf{z}_{j} \rangle) = 0 \qquad \text{for every fixed $\beta > 0$ and fixed $n \in \mathbb{N}$,}
\end{equation*}
while relation \eqref{eq:Weyl.sum.estimate.varying.n}, in particular, implies that 
\begin{equation*}
\lim_{\substack{N \to \infty \\ N \in A}} N^{-1+\beta} \sum_{i,j=1}^{N}  P_n^{(d)}( \langle \mathbf{z}_{i}, \mathbf{z}_{j} \rangle) = 0 \qquad \text{for every fixed $\beta > 0$}
\end{equation*}
uniformly in $n \leq c \, N^{\frac{\alpha}{d}} \, \Psi( N )$, $N \in A$, if $0 < \alpha < 1$ and under the assumptions $\Psi( N ) = o( 1 )$ and $N^{\frac{\alpha}{d}} \, \Psi( N ) \to \infty$ as $N \to \infty$.

So far (see \cite{Brauchart_Saff_Sloan+2014:qmc-designs}), the only example of such sequences are sequences of spherical $t(N)$-designs with $t(N) \asymp N^{\frac{1}{d}}$.

\end{remark}

\begin{remark}
  As indicated in Section \ref{sec:sphere}, we restricted this study
  to the sphere for ease of computation. Most of the results would
  extend \emph{mutatis mutandis} to other homogeneous spaces like the
  torus or the projective plane. We expect that the definition of
  hyperuniformity would carry over to compact Riemannian manifolds
  with considerably more effort and technicalities in the harmonic
  analysis.
\end{remark}

\begin{ackno}
  This work was initiated during the programme ``Minimal Energy Point Sets,
  Lattices, and Designs'' held at the Erwin Schr\"odinger Institute in Vienna
  in Fall 2014. There Salvatore Torquato asked how to properly define a notion
  of hyperuniformity on compact sets. This paper is an attempt to answer that
  question.
The authors are grateful to two anonymous referees for their
  many valuable remarks that helped improving the presentation of the paper.
\end{ackno}

\providecommand{\bysame}{\leavevmode\hbox to3em{\hrulefill}\thinspace}
\providecommand{\MR}{\relax\ifhmode\unskip\space\fi MR }
% \MRhref is called by the amsart/book/proc definition of \MR.
\providecommand{\MRhref}[2]{%
  \href{http://www.ams.org/mathscinet-getitem?mr=#1}{#2}
}
\providecommand{\href}[2]{#2}

\end{document}